\documentclass[a4paper,twoside,12pt]{article}
\usepackage[english]{babel}

\textheight23.50cm \textwidth17.4cm
\topmargin=-10mm
\oddsidemargin=-5Mm
\evensidemargin=-5mm
\usepackage{amsmath,amsfonts,amssymb,amsthm}
\newtheorem{teo}{Theorem}
\newtheorem{lem}{Lemma}

\newtheorem{corol}{Corollary}

 \title{{\bf One more  theorem on norm equivalence  in the Lebesgue  space      }}

\author{Maksim \,V.~Kukushkin   \\ \\
 \small  \textit{Moscow State University of Civil Engineering, 129337,  Moscow, Russia}\\
 \small\textit{Kabardino-Balkarian Scientific Center, RAS, 360051,  Nalchik, Russia}\\
\textit{\small\textit{kukushkinmv@rambler.ru}} }

\date{}
\begin{document}

\maketitle

\begin{abstract}
In this paper we consider a   norm based on the infinitesimal generator of the shift semigroup in a direction. The relevance of such a focus is guaranteed by an  abstract representation of a fractional integro-differential operator  by means of a composition of the  corresponding infinitesimal generator.
The main result of the paper is a theorem establishing  equivalence of norms in  functional spaces. Even without mentioning the relevance  of this result for the constructed theory, we claim  it deserves to be considered   itself.

\end{abstract}
\begin{small}\textbf{Keywords:}  Equivalence of norms; compact embedding of spaces;  infinitesimal generator;
 m-accretive operator; uniformly elliptic operator.\\\\
{\textbf{MSC} 46E30; 46E40;     47B28; 20M05. }
\end{small}

\section{Introduction}

The facts that have motivated us to write this paper lie in the  fractional calculus theory. Basically, an event that a differential operator with a fractional derivative in final terms underwent  a careful study \cite{firstab_lit:1Nakhushev1977}, \cite{kukushkin2019}  have played an important role in our research. The main feature is that there exists various approaches to study the operator and one of them is based on an opportunity to represent it in a sum of a senior term and an a lower term, here we should note that this method works if the  senior term is selfadjoint or normal. Thus,
in the case corresponding to a selfadjoint senior term, we can partially solve the problem  having applied the results of the   perturbation theory,   within the framework of which  the following papers are well-known   \cite{firstab_lit:1Katsnelson}, \cite{firstab_lit:1Krein},   \cite{firstab_lit:2Markus},
  \cite{firstab_lit:3Matsaev}, \cite{firstab_lit:1Mukminov},
 \cite{firstab_lit:Shkalikov A.}. Note that  to apply the last paper results  we must have the mentioned above representation. In other cases we can use methods of the papers  \cite{firstab_lit(arXiv non-self)kukushkin2018},  which are relevant if we deal with non-selfadjoint operators and allow us  to study spectral properties of   operators.
In the  paper \cite{kukushkin Gen}  we  explore  a special  operator class for which    a number of  spectral theory theorems can be applied. Further, we construct an abstract  model of a  differential operator in terms of m-accretive operators and call it an m-accretive operator transform, we  find  such conditions that    being  imposed guaranty  that the transform   belongs to the class. One of them is a compact embedding of a space generated by an m-accretive operator (infinitesimal generator) into the initial Hilbert space. Note that in the case corresponding to the second  order operator with the  Kiprianov operator in final terms we have obtained  the embedding mentioned above in the one-dimensional case only. In this paper we try to reveal this problem  and the main result   is a theorem establishing    equivalence of norms in   function spaces  in consequence  of  which we have  a   compact embedding of a space generated by the infinitesimal generator of the shift semigroup in a direction into the Lebsgue space. We should note that this result do not give us a useful concrete  application in the built theory for it is more of  an abstract generalization.  However   this result, by virtue of popularity and well known applicability of the  Lebesgue spaces theory,    deserves    to be considered    itself.

\section{Preliminaries}

Let    $ C,C_{i} ,\;i\in \mathbb{N}_{0}$ be   real constants. We   assume   that  a  value of $C$ is positive and   can be different in   various formulas  but   values of $C_{i} $ are  certain.   Everywhere further, if the contrary is not stated, we consider   linear    densely defined operators acting on a separable complex  Hilbert space $\mathfrak{H}$. Denote by $ \mathcal{B} (\mathfrak{H})$    the set of linear bounded operators   on    $\mathfrak{H}.$  Denote by
    $\tilde{L}$   the  {\it closure} of an  operator $L.$  Denote by    $    \mathrm{D}   (L),\,   \mathrm{R}   (L),\,\mathrm{N}(L)$      the  {\it domain of definition}, the {\it range},  and the {\it kernel} or {\it null space}  of an  operator $L$ respectively.
Consider a pair of complex Hilbert spaces $\mathfrak{H},\mathfrak{H}_{+},$ the notation
$
\mathfrak{H}_{+}\subset\subset\mathfrak{ H}
$
   means that $\mathfrak{H}_{+}$ is dense in $\mathfrak{H}$ as a set of    elements and we have a bounded embedding provided by the inequality
$
\|f\|_{\mathfrak{H}}\leq C_{0}\|f\|_{\mathfrak{H}_{+}},\,C_{0}>0,\;f\in \mathfrak{H}_{+},
$
moreover   any  bounded  set with respect to the norm $\mathfrak{H}_{+}$ is compact with respect to the norm $\mathfrak{H}.$
 An operator $L$ is called  {\it bounded from below}   if the following relation  holds  $\mathrm{Re}(Lf,f)_{\mathfrak{H}}\geq \gamma_{L}\|f\|^{2}_{\mathfrak{H}},\,f\in  \mathrm{D} (L),\,\gamma_{L}\in \mathbb{R},$  where $\gamma_{L}$ is called a lower bound of $L.$ An operator $L$ is called  {\it   accretive}   if  $\gamma_{L}=0.$
 An operator $L$ is called  {\it strictly  accretive}   if  $\gamma_{L}>0.$      An  operator $L$ is called    {\it m-accretive}     if the next relation  holds $(A+\zeta)^{-1}\in \mathcal{B}(\mathfrak{H}),\,\|(A+\zeta)^{-1}\| \leq   (\mathrm{Re}\zeta)^{-1},\,\mathrm{Re}\zeta>0. $
 Assume that  $T_{t},\,(0\leq t<\infty)$ is a semigroup of bounded linear operators on    $\mathfrak{H},$    by definition put
$$
Af=-\lim\limits_{t\rightarrow+0} \left(\frac{T_{t}-I}{t}\right)f,
$$
where $\mathrm{D}(A)$  is a set of elements for which  the last limit  exists in the sense of the norm  $\mathfrak{H}.$    In accordance with definition \cite[p.1]{Pasy},\cite{firstab_lit:Yosida} the operator $-A$ is called the  {\it infinitesimal  generator} of the semigroup $T_{t}.$
Using     notations of the paper     \cite{firstab_lit:kipriyanov1960} we assume that $\Omega$ is a  convex domain of the  $n$ -  dimensional Euclidean space $\mathbb{E}^{n}$, $P$ is a fixed point of the boundary $\partial\Omega,$
$Q(r,\mathbf{e})$ is an arbitrary point of $\Omega;$ we denote by $\mathbf{e}$   a unit vector having a direction from  $P$ to $Q,$ denote by $r=|P-Q|$   the Euclidean distance between the points $P,Q,$ and   use the shorthand notation    $T:=P+\mathbf{e}t,\,t\in \mathbb{R}.$
We   consider the Lebesgue  classes   $L_{p}(\Omega),\;1\leq p<\infty $ of  complex valued functions.  For the function $f\in L_{p}(\Omega),$    we have
\begin{equation}\label{1}
\int\limits_{\Omega}|f(Q)|^{p}dQ=\int\limits_{\omega}d\chi\int\limits_{0}^{d(\mathbf{e})}|f(Q)|^{p}r^{n-1}dr<\infty,
\end{equation}
where $d\chi$   is an element of   solid angle of
the unit sphere  surface (the unit sphere belongs to $\mathbb{E}^{n}$)  and $\omega$ is a  surface of this sphere,   $d:=d(\mathbf{e})$  is the  length of the  segment of the  ray going from the point $P$ in the direction
$\mathbf{e}$ within the domain $\Omega.$
 We use a shorthand  notation  $P\cdot Q=P^{i}Q_{i}=\sum^{n}_{i=1}P_{i}Q_{i}$ for the inner product of the points $P=(P_{1},P_{2},...,P_{n}),\,Q=(Q_{1},Q_{2},...,Q_{n})$ which     belong to  $\mathbb{E}^{n}.$
     Denote by  $D_{i}f$  a weak partial derivative of the function $f$ with respect to a coordinate variable with index   $1\leq i\leq n.$
We  assume that all functions have  a zero extension outside  of $\bar{\Omega}.$
Everywhere further,   unless  otherwise  stated,  we   use  notations of the papers      \cite{firstab_lit:kato1980},  \cite{firstab_lit:kipriyanov1960}, \cite{firstab_lit:1kipriyanov1960}.

\begin{lem}\label{L1}
Assume that   $A$ is  a  closed densely defined  operator, the following    condition  holds
\begin{equation}\label{2}
\|(A+t)^{-1}\|_{\mathrm{R} \rightarrow \mathfrak{H}}\leq\frac{1}{ t},\,t>0,
\end{equation}
where a notation  $\mathrm{R}:=\mathrm{R}(A+t)$ is used. Then the operator  $A$ is m-accretive.
\end{lem}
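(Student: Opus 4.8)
The plan is to check directly the two defining features of an m-accretive operator: that $A$ is accretive, i.e. $\mathrm{Re}(Af,f)_{\mathfrak{H}}\geq 0$ for every $f\in\mathrm{D}(A)$, and that $\mathrm{R}(A+\zeta)=\mathfrak{H}$ for every $\zeta$ with $\mathrm{Re}\,\zeta>0$; the resolvent estimate $\|(A+\zeta)^{-1}\|\leq(\mathrm{Re}\,\zeta)^{-1}$ will then drop out for free. This is the Lumer--Phillips circle of ideas adapted to hypothesis \eqref{2}.

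First I would read accretivity off \eqref{2}. Given $f\in\mathrm{D}(A)$, set $g=(A+t)f\in\mathrm{R}(A+t)$; then \eqref{2} says $\|f\|_{\mathfrak{H}}\leq t^{-1}\|g\|_{\mathfrak{H}}$, that is, $\|Af+tf\|_{\mathfrak{H}}^{2}\geq t^{2}\|f\|_{\mathfrak{H}}^{2}$ for all $t>0$. Expanding the square gives $\|Af\|_{\mathfrak{H}}^{2}+2t\,\mathrm{Re}(Af,f)_{\mathfrak{H}}\geq 0$; dividing by $2t$ and letting $t\to+\infty$ yields $\mathrm{Re}(Af,f)_{\mathfrak{H}}\geq 0$. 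Running the same estimate with a complex shift and using Cauchy--Schwarz in the form $\|Af\|_{\mathfrak{H}}\|f\|_{\mathfrak{H}}\geq|(Af,f)_{\mathfrak{H}}|$, one gets more precisely $\|(A+\zeta)f\|_{\mathfrak{H}}\geq(\mathrm{Re}\,\zeta)\|f\|_{\mathfrak{H}}$ whenever $\mathrm{Re}\,\zeta>0$, which is exactly the bound we need on $(A+\zeta)^{-1}$ once its domain is known to be all of $\mathfrak{H}$.

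Next I would observe that $\mathrm{R}(A+t)$ is a \emph{closed} subspace: if $(A+t)f_{n}$ is Cauchy in $\mathfrak{H}$, then $\|f_{n}-f_{m}\|_{\mathfrak{H}}\leq t^{-1}\|(A+t)(f_{n}-f_{m})\|_{\mathfrak{H}}$ forces $f_{n}\to f$, and the closedness of $A$ gives $f\in\mathrm{D}(A)$ with $(A+t)f=\lim_{n}(A+t)f_{n}$. The crucial step --- and the one I expect to be the main obstacle --- is to upgrade this to $\mathrm{R}(A+t)=\mathfrak{H}$. I would argue by connectedness: the set $\Lambda=\{t>0:\,\mathrm{R}(A+t)=\mathfrak{H}\}$ is open in $(0,\infty)$, since around any $t_{0}\in\Lambda$ the resolvent is bounded with norm $\leq t_{0}^{-1}$ by \eqref{2} and hence $A+t$ remains surjective for $|t-t_{0}|<t_{0}$ by a Neumann series, and it is closed in $(0,\infty)$ because of the uniform bound together with the closedness of the ranges just established; as $(0,\infty)$ is connected, everything reduces to showing $\Lambda\neq\varnothing$. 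This non-emptiness is the delicate point; the natural route is to identify $\mathrm{R}(A+t)^{\perp}$ with $\mathrm{N}(A^{*}+t)$ --- legitimate since $A$, being closed and densely defined, has a densely defined adjoint --- and to show this kernel is trivial by transferring accretivity to $A^{*}$; alternatively, in the setting where $-A$ is genuinely the infinitesimal generator of a strongly continuous semigroup the surjectivity is supplied by the construction itself.

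Once $\mathrm{R}(A+t)=\mathfrak{H}$ for all $t>0$, the same open--closed dichotomy applied in the connected half-plane $\{\zeta:\mathrm{Re}\,\zeta>0\}$, which already contains the positive real axis, promotes it to $\mathrm{R}(A+\zeta)=\mathfrak{H}$ for all $\mathrm{Re}\,\zeta>0$; combined with the inequality $\|(A+\zeta)f\|_{\mathfrak{H}}\geq(\mathrm{Re}\,\zeta)\|f\|_{\mathfrak{H}}$ this gives $(A+\zeta)^{-1}\in\mathcal{B}(\mathfrak{H})$ with $\|(A+\zeta)^{-1}\|\leq(\mathrm{Re}\,\zeta)^{-1}$, i.e. $A$ is m-accretive.
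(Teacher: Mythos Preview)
Your derivation of accretivity from \eqref{2} matches the paper's opening step exactly: expand $\|(A+t)f\|^{2}\ge t^{2}\|f\|^{2}$, divide by $t$, send $t\to\infty$. For the surjectivity half the paper does not run your open--closed/Neumann-series argument on $(0,\infty)$ and then the half-plane; instead, once accretivity is in hand, it notes that the open left half-plane lies in $\Delta(A)=\mathbb{C}\setminus\overline{\Theta(A)}$ and invokes Kato's Theorem~3.2, which gives that on this connected set $A-\lambda$ has closed range, $\mathrm{nul}(A-\lambda)=0$, and $\mathrm{def}(A-\lambda)$ is constant. That is the same invariance-of-index principle as your connectedness argument, just packaged as a citation.

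The genuine gap in your write-up is at the step you yourself label ``the delicate point''. You do not actually establish $\Lambda\neq\varnothing$; you only gesture at transferring accretivity to $A^{*}$ (which needs its own argument --- accretivity of $A$ does not formally yield accretivity of $A^{*}$) or at importing surjectivity from a semigroup construction (not available here: the lemma is meant to apply \emph{before} one knows $-A$ generates anything, cf.\ its use for $\tilde{A}_{0}$ in Lemma~\ref{L2}). The paper commits to a specific mechanism at exactly this spot: fix $\lambda_{0}$ with $\mathrm{Re}\,\lambda_{0}<0$; if $f\in\mathrm{D}(A)\cap\mathrm{R}(A-\lambda_{0})^{\perp}$ then $(A-\lambda_{0})f\perp f$, so $0=\mathrm{Re}\bigl(f,(A-\lambda_{0})f\bigr)\ge -\mathrm{Re}\,\lambda_{0}\,\|f\|^{2}$ forces $f=0$; density of $\mathrm{D}(A)$ is then invoked to conclude $\mathrm{R}(A-\lambda_{0})^{\perp}=0$, hence $\mathrm{def}\equiv 0$. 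Whatever one makes of that last implication, the paper at least supplies a concrete argument where your proposal leaves a hole; as written, your proof is incomplete there.
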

\begin{proof}
Using \eqref{2} consider
$$
\|f\|^{2}_{\mathfrak{H}}\leq  \frac{1}{t^{2}}  \|(A+t)f \|^{2}_{ \mathfrak{H}};\,
 \|f\|^{2}_{\mathfrak{H}}\leq  \frac{1}{ t^{2}} \left\{ \| A f\|^{2}_{ \mathfrak{H}}+2t \mathrm{Re}(Af,f)_{ \mathfrak{H}}+t^{2}\|   f \|^{2}_{ \mathfrak{H}}\right\} ;
$$
 $$
t^{-1} \| A  f \|^{2}_{ \mathfrak{H}}+2 \mathrm{Re}(A f,f)_{ \mathfrak{H}}\geq0,\,f\in  \mathrm{D} (A).
$$
Let $t$ be tended to infinity, then we obtain
\begin{equation}\label{3}\mathrm{Re}(A f,f)_{ \mathfrak{H}}\geq0 ,\,f\in \mathrm{D}(A).
\end{equation}
It means  that   the  operator $A$ is   accretive.
Due to \eqref{3}, we have   $\{\lambda \in \mathbb{C}:\,\mathrm{Re}\lambda<0\}\subset \Delta(A),  $ where $\Delta(A)=\mathbb{C}\setminus   \overline{\Theta  (A)}.$  Applying Theorem 3.2 \cite[p.268]{firstab_lit:kato1980}, we obtain that $A-\lambda$ has a closed range and  $\mathrm{nul} (A-\lambda)=0,\,\mathrm{def} (A-\lambda)=\mathrm{const},\,\forall\lambda\in \Delta(A).$
Let $\lambda_{0}\in \Delta(A) ,\;{\rm Re}\lambda_{0} <0.$
Note that in consequence of inequality  \eqref{3}, we have
 \begin{equation}\label{4}
  {\rm Re} ( f,(A-\lambda )f  )_{\mathfrak{H}}\geq   - {\rm Re} \lambda   \|f\|^{2}_{\mathfrak{H}},\,f\in \mathrm{D}(A).
 \end{equation}
  Since the  operator $A-\lambda_{0}$ has a closed range, then
\begin{equation*}
 \mathfrak{H}=\mathrm{R} (A-\lambda_{0})\oplus \mathrm{R} (A-\lambda_{0})^{\perp} .
 \end{equation*}
We remark  that the intersection of the sets  $\mathrm{D}(A)$ and $\mathrm{R} (A-\lambda_{0})^{\perp}$ is  zero, because if we assume  the contrary,   then applying inequality  \eqref{4},  for arbitrary element
 $f\in \mathrm{D}(A)\cap \mathrm{R}  (A-\lambda_{0})^{\perp}$    we get
 \begin{equation*}
 - {\rm Re} \lambda_{0}  \|f\|^{2}_{\mathfrak{H}} \leq  {\rm Re} ( f,[A-\lambda_{0} ]f  )_{\mathfrak{H}}=0,
 \end{equation*}
hence $f=0.$   It implies that
$$
\left(f,g\right)_{\mathfrak{H}}=0,\;\forall f\in  \mathrm{R}  (A-\lambda_{0})^{\perp},\;\forall g\in \mathrm{D}(A).
$$
Since $  \mathrm{D}(A)$  is a dense set in $\mathfrak{H},$ then $\mathrm{R}  (A-\lambda_{0})^{\perp}=0.$ It implies that  ${\rm def} (A-\lambda_{0}) =0$ and if we take into account  Theorem 3.2 \cite[p.268]{firstab_lit:kato1980}, then
we come to the conclusion that ${\rm def} (A-\lambda )=0,\;\forall\lambda\in \Delta(A),$ hence the operator $A$ is m-accretive.
The proof is complete.
\end{proof}
Assume that  $\Omega\subset \mathbb{E}^{n}$ is  a convex domain, with a sufficient smooth boundary ($C^{3}$ class)  of n-dimensional Euclidian space. For the sake of the simplicity we consider that $\Omega$ is bounded.
Consider the shift semigroup in a direction acting on $L_{2}(\Omega)$ and  defined as follows
$
T_{t}f(Q)=f(Q+\mathbf{e}t),
$
where $Q\in \Omega,\,Q=P+\mathbf{e}r.$   The following lemma establishes  a property of  the infinitesimal generator $-A$ of the semigroup $T_{t}.$
\begin{lem}\label{L2} We claim that
$
A=\tilde{A_{0}},\,\mathrm{N}(A)=0,
$
where $A_{0}$  is a restriction of $A$ on the set
   $ C^{\infty}_{0}( \Omega ).$
 \end{lem}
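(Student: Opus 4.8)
The plan is to work in the polar-type coordinates centred at $P$ used in \eqref{1}: a point is $Q=P+\mathbf{e}r$, $dQ=r^{n-1}dr\,d\chi$, the shift $Q\mapsto Q+\mathbf{e}t$ becomes $(r,\mathbf{e})\mapsto(r+t,\mathbf{e})$, and $A$ acts on smooth functions as $Af=-\partial_{\mathbf{e}}f$ with $\partial_{\mathbf{e}}f=\mathbf{e}\cdot\nabla f=\partial f(P+\mathbf{e}r)/\partial r$. First I would check that $T_{t}$ is a strongly continuous contraction semigroup on $L_{2}(\Omega)$: with $s=r+t$ and $(s-t)^{n-1}\leq s^{n-1}$, and since $f$ vanishes for $s>d(\mathbf{e})$, one gets $\|T_{t}f\|_{\mathfrak{H}}\leq\|f\|_{\mathfrak{H}}$, while strong continuity follows on functions continuous and compactly supported in $\{r>0\}$ and then by density. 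Hence $(A+t)^{-1}=\int_{0}^{\infty}e^{-ts}T_{s}\,ds$ satisfies \eqref{2}, so by Lemma \ref{L1} the operator $A$ is m-accretive; in particular $A$ is closed and $A+\lambda$ maps $\mathrm{D}(A)$ bijectively onto $\mathfrak{H}$ for each $\lambda>0$.

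The relation $\mathrm{N}(A)=0$ I would deduce from nilpotency of the semigroup: since $\Omega$ is bounded, $Q+\mathbf{e}t\notin\bar{\Omega}$ for every $Q\in\Omega$ once $t\geq\mathrm{diam}\,\Omega$, so $T_{t}=0$; if $Af=0$, then $\frac{d}{dt}T_{t}f=-T_{t}Af=0$, hence $f=T_{t}f=0$ for large $t$. (Equivalently, $\partial_{\mathbf{e}}f=0$ together with the zero extension of $f$ forces $f\equiv0$ along every ray.)

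For $A=\tilde{A}_{0}$ I would first note that $C^{\infty}_{0}(\Omega)\subset\mathrm{D}(A)$ with $A_{0}f=-\partial_{\mathbf{e}}f$, the difference quotients $t^{-1}(T_{t}f-f)$ converging uniformly because $\mathbf{e}(Q)=(Q-P)/|Q-P|$ is smooth near the compact support of $f$; thus $\tilde{A}_{0}\subset A$ with both operators closed. It then suffices to prove $\mathrm{R}(\tilde{A}_{0}+\lambda)=\mathfrak{H}$ for one $\lambda>0$, since $A+\lambda$ is then an injective extension of the surjection $\tilde{A}_{0}+\lambda$, whence the domains coincide. This range is closed because $\tilde{A}_{0}$ is closed and accretive, accretivity coming from integration by parts (the boundary terms vanish since the support of $f$ is a compact subset of $\Omega$ and avoids the singularity of $\mathbf{e}$ at $P$):
\[
2\,\mathrm{Re}(A_{0}f,f)_{\mathfrak{H}}=-2\,\mathrm{Re}\!\int_{\Omega}(\partial_{\mathbf{e}}f)\bar{f}\,dQ=\int_{\Omega}(\mathrm{div}\,\mathbf{e})\,|f|^{2}dQ=(n-1)\int_{\Omega}\frac{|f(Q)|^{2}}{r}\,dQ\geq0 .
\]
Density of the range amounts to $\mathrm{N}(A_{0}^{\ast}+\lambda)=0$, where $A_{0}^{\ast}u=\partial_{\mathbf{e}}u+(\mathrm{div}\,\mathbf{e})u=\partial_{\mathbf{e}}u+(n-1)r^{-1}u$ is the maximal operator carrying no boundary constraint; a solution of $(A_{0}^{\ast}+\lambda)u=0$ solves $u'+\big((n-1)r^{-1}+\lambda\big)u=0$ along a.e. ray, so $u(P+\mathbf{e}r)=C(\mathbf{e})\,r^{-(n-1)}e^{-\lambda r}$, and
\[
\|u\|^{2}_{\mathfrak{H}}=\int_{\omega}|C(\mathbf{e})|^{2}d\chi\int_{0}^{d(\mathbf{e})}r^{1-n}e^{-2\lambda r}dr
\]
diverges at $r=0$ when $n\geq2$ unless $C(\mathbf{e})=0$; hence $u=0$, the range is dense, and $A=\tilde{A}_{0}$.

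The step I expect to be the main obstacle — and the one in which the whole content of the theorem sits — is this density of $\mathrm{R}(A_{0}+\lambda)$, i.e. the fact that $C^{\infty}_{0}(\Omega)$ is a core of $A$. It is exactly here that the hypothesis $n\geq2$ is indispensable: the fundamental solution $r^{-(n-1)}e^{-\lambda r}$ of the adjoint equation belongs to $L_{2}$ with weight $r^{n-1}$ near $P$ precisely when $n=1$ and never when $n\geq2$ — equivalently $\int_{0}r^{1-n}dr=\infty$ for $n\geq2$ — which is the analytic form of the vanishing of the capacity of the single boundary point $P$ in $\mathbb{E}^{n}$ for $n\geq2$, the very effect absent in the one-dimensional situation. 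An alternative, more hands-on route to the core property is to truncate a given $f\in\mathrm{D}(A)$ by cut-offs vanishing near $P$ — a power cut-off for $n\geq3$, a logarithmic one for $n=2$ — and, near the far boundary, to use a Hardy-type estimate, obtaining $C^{\infty}_{0}(\Omega)$ approximants in the graph norm; in either approach the role of the dimension is identical. Checking that $A_{0}^{\ast}$ is the constraint-free maximal operator and that the ray-by-ray ordinary differential equation reasoning is legitimate for almost every $\mathbf{e}$ is routine once the coordinates of \eqref{1} are in hand.
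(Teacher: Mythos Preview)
Your argument is correct and follows a genuinely different route from the paper's. For $A=\tilde{A}_{0}$ the paper never touches the adjoint: it observes that the resolvent bound \eqref{5} for $A$ restricts verbatim to $(\lambda+\tilde{A}_{0})^{-1}$ on $\mathrm{R}(\lambda+\tilde{A}_{0})$, feeds this into Lemma~\ref{L1} to conclude that $\tilde{A}_{0}$ is already m-accretive, and then uses that an m-accretive operator admits no proper accretive extension to force $\tilde{A}_{0}=A$. You instead prove accretivity via the divergence identity and obtain range density by solving the radial ODE for $A_{0}^{\ast}+\lambda$; this makes the role of the hypothesis $n\geq 2$ completely explicit---a point the paper's abstract route leaves invisible (and, as your analysis shows, the core property genuinely fails for $n=1$). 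For $\mathrm{N}(A)=0$ the paper again proceeds differently: rather than invoking nilpotency of $T_{t}$, it writes down the explicit right inverse $Bf(Q)=\int_{0}^{r}f(P+\mathbf{e}[r-t])\,dt$, shows $B\in\mathcal{B}(L_{2})$, and deduces $A^{-1}\subset B$. Your nilpotency argument is cleaner for the purposes of this lemma, but the paper's construction is not wasted effort: the operators $B_{i}$ reappear as the building blocks of the $G_{i}$ in the proof of Theorem~\ref{T2}.
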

\begin{proof}
Let us show  that $T_{t}$ is a strongly continuous semigroup ($C_{0}$  semigroup). It can be easily established   due to the  continuous in average property. Using the Minkowskii inequality, we have
 $$
 \left\{\int\limits_{\Omega}|f(Q+\mathbf{e}t)-f(Q)|^{2}dQ\right\}^{\frac{1}{2}}\leq  \left\{\int\limits_{\Omega}|f(Q+\mathbf{e}t)-f_{m}(Q+\mathbf{e}t)|^{2}dQ\right\}^{\frac{1}{2}}+
 $$
 $$
 +\left\{\int\limits_{\Omega}|f(Q)-f_{m}(Q)|^{2}dQ\right\}^{\frac{1}{2}}+\left\{\int\limits_{\Omega}|f_{m}(Q)-f_{m}(Q+\mathbf{e}t)|^{2}dQ\right\}^{\frac{1}{2}}=
 $$
 $$
 =I_{1}+I_{2}+I_{3}<\varepsilon,
 $$
where $f\in L_{2}(\Omega),\,\left\{f_{n}\right\}_{1}^{\infty}\subset C_{0}^{\infty}(\Omega);$  $m$ is chosen so that $I_{1},I_{2}< \varepsilon/3 $ and $t$
is chosen so that $I_{3}< \varepsilon/3.$
Thus,  there exists such a positive  number $t_{0}$   so that
$$
\|T_{t}f-f\|_{L_{2} }<\varepsilon,\,t<t_{0},
$$
for arbitrary small $\varepsilon>0.$ Hence in accordance with the definition $T_{t}$ is  a $C_{0}$ semigroup.   Using the assumption that  all functions have the zero extension outside $\bar{\Omega},$   we have
$\|T_{t}\|  \leq 1.$ Hence  we conclude that $T_{t}$ is a $C_{0}$ semigroup of contractions (see \cite{Pasy}).
Hence   by virtue of   Corollary 3.6 \cite[p.11]{Pasy}, we have
\begin{equation} \label{5}
\|(\lambda+A)^{-1}\|  \leq \frac{1}{\mathrm{Re} \lambda },\,\mathrm{Re}\lambda>0.
\end{equation}
Inequality \eqref{5} implies that $A$ is m-accretive.
    It is the well-known fact that   an infinitesimal  generator    $-A$ is a closed operator, hence $A_{0}$  is closeable.
 It is not hard to prove that $ \tilde{A} _{0}$ is an m-accretive operator. For this purpose let us    rewrite relation \eqref{5} in the form
\begin{equation*}
\|(\lambda+ \tilde{A} _{0})^{-1}\|_{\mathrm{R}\rightarrow \mathfrak{H}}  \leq \frac{1}{\mathrm{Re} \lambda },\,\mathrm{Re}\lambda>0,
\end{equation*}
applying Lemma \ref{1}, we obtain that $ \tilde{A} _{0}$ is an m-accretive operator. Note that there does not exist an  accretive extension of an m-accretive operator (see \cite{firstab_lit:kato1980}). On the other hand it is clear that $\tilde{A} _{0}\subset A.$  Thus we conclude that $\tilde{A} _{0}= A.$
Consider an operator
 $$
 B f(Q)=\!\int_{0}^{r}\!\!f(P+\mathbf{e }[r-t])dt,\,f\in L_{2}(\Omega).
 $$
 It is not hard to prove that  $B \in \mathcal{B}(L_{2}),$   applying the generalized Minkowskii inequality, we get
 $$
 \|B f\|_{L_{2} }\leq \int\limits_{0}^{\mathrm{diam\,\Omega}}dt  \left(\int\limits_{\Omega}|f(P+\mathbf{e }[r-t])|dQ\right)^{1/2}\leq C\|f\|_{L_{2} }.
 $$
Note that   the fact $A^{-1}_{ 0}\subset B ,$    follows from the properties of the  one-dimensional integral defined on smooth functions.
Using  Theorem 2 \cite[p.555]{firstab_lit:Smirnov5}, the proved above fact $\tilde{A} _{0}= A,$  we deduce that  $A^{-1} =   \widetilde{A ^{-1}_{ 0}}  ,$    hence $A^{-1} \subset B .$
 The proof is complete.
\end{proof}

\section{Main results}
Consider a linear  space
$
\mathbb{L}^{n}_{2}(\Omega):=\left\{f=(f_{1},f_{2},...,f_{n}),\,f_{i}\in L_{2}(\Omega)\right\},
$
endowed with the inner product
$$
(f,g)_{\mathbb{L}^{n}_{2}}=\int\limits_{\Omega} (f, g)_{\mathbb{E}^{n}} dQ,\,f,g\in \mathbb{L}^{n}_{2}(\Omega).
$$
It is clear that this  pair forms a Hilbert space and let us use the same  notation $\mathbb{L}^{n}_{2}(\Omega)$ for it.
Consider a semilinear form
$$
t(f,g):=\sum\limits_{i=1}^{n}\int\limits_{\Omega} (f ,\mathbf{e_{i}})_{\mathbb{E}^{n}}\overline{(g,\mathbf{e_{i}})}_{\mathbb{E}^{n}} dQ,\,f,g\in \mathbb{L} ^{n}_{2} (\Omega),
$$
where $\mathbf{e_{i}}$ corresponds to $P_{i}\in \partial\Omega,\,i=1,2,...,n.$
\begin{lem}\label{L3}
The points $P_{i}\in \partial\Omega,\,i=1,2,...,n$ can be chosen so that the   form $t$ generates an inner product.
\end{lem}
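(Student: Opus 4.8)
The plan is to show that, for a suitable choice of $P_{1},\dots,P_{n}$, the Hermitian form $t$ is positive definite; sesquilinearity and conjugate symmetry are immediate from its definition, so the whole matter reduces to the implication $t(f,f)=0\Rightarrow f=0$ in $\mathbb{L}^{n}_{2}(\Omega)$. Writing $\mathbf{e}_{i}(Q)=(Q-P_{i})/|Q-P_{i}|$ for the unit vector directed from $P_{i}$ to $Q$, we have $t(f,f)=\sum_{i=1}^{n}\int_{\Omega}|(f(Q),\mathbf{e}_{i}(Q))_{\mathbb{E}^{n}}|^{2}\,dQ\geq0$, and this vanishes precisely when $(f(Q),\mathbf{e}_{i}(Q))_{\mathbb{E}^{n}}=0$ for every $i$ and almost every $Q\in\Omega$. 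Hence it suffices to choose $P_{1},\dots,P_{n}\in\partial\Omega$ so that the vectors $\mathbf{e}_{1}(Q),\dots,\mathbf{e}_{n}(Q)$ are linearly independent, i.e. form a basis of $\mathbb{E}^{n}$, for almost every $Q\in\Omega$: then the only vector orthogonal to all of them is zero, which forces $f(Q)=0$ a.e., hence $f=0$ in $\mathbb{L}^{n}_{2}(\Omega)$.

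To construct such points I would fix a point $Q_{0}\in\Omega$ and an arbitrary basis $\mathbf{u}_{1},\dots,\mathbf{u}_{n}$ of $\mathbb{E}^{n}$. Since $\Omega$ is bounded, convex and open, the ray $\{Q_{0}+s\,\mathbf{u}_{i}:\,s>0\}$ meets $\partial\Omega$ at exactly one point $P_{i}$, and then $Q_{0}-P_{i}=-s_{i}\mathbf{u}_{i}$ with $s_{i}>0$; consequently $\mathbf{e}_{i}(Q_{0})$ is a nonzero scalar multiple of $\mathbf{u}_{i}$, so $\mathbf{e}_{1}(Q_{0}),\dots,\mathbf{e}_{n}(Q_{0})$ are linearly independent. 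Now set $W(Q):=\det[\,Q-P_{1},\dots,Q-P_{n}\,]$, the determinant of the matrix whose $i$-th column is $Q-P_{i}$. Each entry of this matrix is an affine function of the coordinates of $Q$, so $W$ is a polynomial of degree at most $n$ in $Q_{1},\dots,Q_{n}$, and $W(Q_{0})\neq0$ by the previous step, hence $W\not\equiv0$. A nontrivial polynomial vanishes only on a set of Lebesgue measure zero, so $W(Q)\neq0$ for a.e. $Q\in\Omega$; since $|Q-P_{i}|>0$ for $Q\in\Omega$ (because $P_{i}\in\partial\Omega$ and $\Omega$ is open), this yields that $\mathbf{e}_{1}(Q),\dots,\mathbf{e}_{n}(Q)$ are linearly independent for a.e. $Q\in\Omega$, which is exactly what was required.

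With positive definiteness in hand, the remaining facts that $t$ obeys the Cauchy--Schwarz and triangle inequalities and therefore qualifies as an inner product on $\mathbb{L}^{n}_{2}(\Omega)$ are standard. The only points that call for any care are the elementary convex-geometry claim that a ray emanating from an interior point of a bounded convex domain leaves the domain through a single boundary point, and the fact that the zero set of the non-zero polynomial $W$ is Lebesgue-null; both are classical, and it is worth noting that the $C^{3}$ regularity of $\partial\Omega$ plays no role here -- boundedness and convexity suffice.
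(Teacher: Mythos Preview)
Your proof is correct and follows the same overall strategy as the paper --- reduce to showing that the vectors $\mathbf{e}_{1}(Q),\dots,\mathbf{e}_{n}(Q)$ are linearly independent for almost every $Q\in\Omega$, i.e.\ that the determinant $W(Q)=\det[Q-P_{1},\dots,Q-P_{n}]$ is a.e.\ nonzero --- but the execution differs in two respects. First, the paper selects the $P_{i}$ by requiring $\det(P_{1},\dots,P_{n})\neq0$ (linear independence of the position vectors themselves), arguing that otherwise $\partial\Omega$ would lie in a proper subspace; you instead construct the $P_{i}$ geometrically by shooting rays from a fixed interior point $Q_{0}$ along a chosen basis, which is cleaner and coordinate-free and immediately gives $W(Q_{0})\neq0$. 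Second, to show $W$ is a.e.\ nonzero, the paper carries out explicit determinant manipulations to prove that the zero set of $W$ is exactly the affine hyperplane through $P_{1},\dots,P_{n}$, whereas you invoke the general fact that a non-identically-zero polynomial has Lebesgue-null zero set. Your route is shorter; the paper's route yields the sharper (and geometrically informative) statement that $W$ is in fact affine in $Q$ --- indeed, subtracting the first column from the others gives $W(Q)=\det[Q-P_{1},\,P_{1}-P_{2},\dots,P_{1}-P_{n}]$, so $W$ has degree at most $1$, not merely at most $n$ as you note. This refinement is not needed for the lemma, but it explains why the paper's zero set is precisely a hyperplane. Finally, note that the paper records its choice of points as condition~(6) and refers back to it when defining $\mathbf{L}^{n}_{2}(\Omega)$; your construction proves the lemma as stated but does not deliver condition~(6) verbatim, so if one wants to track the paper's later references one should observe that any $n$ points in general position on $\partial\Omega$ will do.
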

\begin{proof}
It is clear that we should only establish an implication  $t(f,f)=0\,\Rightarrow f=0.$
 Since $\Omega\in \mathbb{E}^{n},$  then without lose of generality we can assume that   there exists $P_{i}\in \partial\Omega,\,i=1,2,...,n,$ such that
\begin{equation}\label{6}\Delta= \left|
\begin{array}{cccc}
P_{11}&P_{12}&...&P_{1n}\\
P_{21}&P_{22}&...&P_{2n}\\
...&...&...&...\\
P_{n1}&P_{n2}&...&P_{nn}
\end{array}
\right|\neq0,
\end{equation}
where $P_{i}=(P_{i1},P_{i2},...,P_{in}).$ It becomes clear if we remind that in the contrary case, for arbitrary set of points $P_{i}\in \partial\Omega,\,i=1,2,...,n,$  we   have
$$
P_{n}=\sum\limits_{k=1}^{n-1}c_{k}P_{k},\,c_{k}= \mathrm{const},
$$
from what follows  that we can consider $\Omega$ at least  as a subset of  $\mathbb{E}^{n-1}.$ Continuing this line of reasonings we can find such  a dimension $p$ that a corresponding $\Delta\neq0$ and further assume  that  $\Omega\in \mathbb{E}^{p}. $
 Consider a relation
\begin{equation*}
 \sum\limits_{i=1}^{n}\int\limits_{\Omega}| (\psi,\mathbf{e_{i}})_{\mathbb{E}^{n}}|^{2}   dQ  =0,\,\psi\in \mathbb{L}^{n}_{2}(\Omega).
\end{equation*}
It follows that  $\left(\psi(Q),\mathbf{e_{i}}\right)_{\mathbb{E}^{n}}=0$ a.e. $i=1,2,...,n.$  Note that every $P_{i}$ corresponds to the set
 $\vartheta_{i}:=\{Q\subset \vartheta_{i} :\;(\psi(Q),\mathbf{e_{i}})_{\mathbb{E}^{n}}\neq0  \}.$ Consider $\Omega'=\Omega\backslash\bigcup\limits_{i=1}^{n}\vartheta_{i},$ it is clear that $\mathrm{mess} \left(\bigcup\limits_{i=1}^{n}\vartheta_{i}\right)=0.$ Note that due to the made construction, we can reformulate the obtained above  relation in the coordinate form
\begin{equation*}
 \left\{ \begin{aligned}
(P_{11}-Q_{1})\psi_{1}(Q)+(P_{12}-Q_{2})\psi_{2}(Q)+...+(P_{1n}-Q_{n})\psi_{n}(Q)=0 \\
 (P_{21}-Q_{1})\psi_{1}(Q)+(P_{22}-Q_{2})\psi_{2}(Q)+ ... +(P_{2n}-Q_{n})\psi_{n}(Q)=0 \\
...\;\;\;\;\;\;\;\;\;\;\;\;\;\;\;\;\;\;\;\;\;\;\;\;\;\;\;\;...\;\;\;\;\;\;\;\;\;\;\;\;\;\;\;\;\;\;\;\;\;\;\;\;\;\;\;\; ...\;\;\;\;\;\;\;\;\;\;\;\;\;\;\;\;\;\;\;\;\;\;\;\;\;\;\;\; \\
 (P_{n1}-Q_{1})\psi_{1}(Q)+(P_{n2}-Q_{2})\psi_{2}(Q)+ ... +(P_{nn}-Q_{n})\psi_{n}(Q)=0
\end{aligned}
 \right.\;,
\end{equation*}
where $\psi=(\psi_{1},\psi_{2},...,\psi_{n}),\,Q=(Q_{1},Q_{2},...,Q_{n}),\, Q\in \Omega'.$ Therefore, if we prove that
$$\Lambda(Q)= \left|
\begin{array}{cccc}
P_{11}-Q_{1}&P_{12}-Q_{2}&...&P_{1n}-Q_{n}\\
P_{21}-Q_{1}&P_{22}-Q_{2}&...&P_{2n}-Q_{n}\\
...&...&...&...\\
P_{n1}-Q_{1}&P_{n2}-Q_{2}&...&P_{nn}-Q_{n}
\end{array}
\right|\neq0\;a.e.,
$$
then we obtain $\psi =0$ a.e. Assume the contrary i.e. that there exists such a set $ \Upsilon \subset  \Omega ,\,\mathrm{mess}\,\Upsilon \neq0,$ so that   $\Lambda(Q)=0,\,Q\in \Upsilon  .$   We have
$$ \left|
\begin{array}{cccc}
P_{11}-Q_{1}&P_{12}-Q_{2}&...&P_{1n}-Q_{n}\\
P_{21}-Q_{1}&P_{22}-Q_{2}&...&P_{2n}-Q_{n}\\
...&...&...&...\\
P_{n1}-Q_{1}&P_{n2}-Q_{2}&...&P_{nn}-Q_{n}
\end{array}
\right|=
\left|
\begin{array}{cccc}
P_{11} &P_{12} &...&P_{1n} \\
P_{21}-Q_{1}&P_{22}-Q_{2}&...&P_{2n}-Q_{n}\\
...&...&...&...\\
P_{n1}-Q_{1}&P_{n2}-Q_{2}&...&P_{nn}-Q_{n}
\end{array}
\right|-
$$
$$
-\left|
\begin{array}{cccc}
 Q_{1}& Q_{2}&...& Q_{n}\\
P_{21}-Q_{1}&P_{22}-Q_{2}&...&P_{2n}-Q_{n}\\
...&...&...&...\\
P_{n1}-Q_{1}&P_{n2}-Q_{2}&...&P_{nn}-Q_{n}
\end{array}
\right|=
\left|
\begin{array}{cccc}
P_{11} &P_{12} &...&P_{1n} \\
P_{21} &P_{22} &...&P_{2n} \\
...&...&...&...\\
P_{n1}-Q_{1}&P_{n2}-Q_{2}&...&P_{nn}-Q_{n}
\end{array}
\right|-
$$
$$
-\left|
\begin{array}{cccc}
P_{11} &P_{12} &...&P_{1n} \\
 Q_{1}& Q_{2}&...& Q_{n}\\
...&...&...&...\\
P_{n1}-Q_{1}&P_{n2}-Q_{2}&...&P_{nn}-Q_{n}
\end{array}
\right|
 -\left|
\begin{array}{cccc}
 Q_{1}& Q_{2}&...& Q_{n}\\
P_{21} &P_{22} &...&P_{2n} \\
...&...&...&...\\
P_{n1}-Q_{1}&P_{n2}-Q_{2}&...&P_{nn}-Q_{n}
\end{array}
\right|=
$$
$$
 =\left|
\begin{array}{cccc}
 P_{11} &P_{12} &...&P_{1n}\\
P_{21} &P_{22} &...&P_{2n} \\
...&...&...&...\\
P_{n1} &P_{n2} &...&P_{nn}
\end{array}
\right|-\sum\limits_{j=1}^{n} \Delta_{j}=0,
$$
where
$$
 \Delta_{j}=\left|
\begin{array}{cccc}
 P_{11} &P_{12} &...&P_{1n}\\
P_{21} &P_{22} &...&P_{2n} \\
...&...&...&...\\
P_{j-1\,1} &P_{j-1\,2} &...&P_{j-1\,n}\\
Q_{1}& Q_{2}&...& Q_{n}\\
P_{j+1\,1} &P_{j+1\,2} &...&P_{j+1\,n}\\
...&...&...&...\\
P_{n1} &P_{n2} &...&P_{nn}
\end{array}
\right|.
$$
Therefore,  we have
$$
\sum\limits_{j=1}^{n}  \Delta_{j}/ \Delta =1,
$$
since $\Delta\neq0.$
Hence, we can treat the above matrix constructions  in the way that gives us the following representation
$$
\sum\limits_{j=1}^{n}  \alpha_{j} P_{j}  =Q,\,\sum\limits_{j=1}^{n} \alpha_{j}   =1,\,\alpha_{j}=\Delta_{j}/ \Delta .
$$
Now, let us prove that $\Upsilon$ belongs to a hyperplane in $\mathbb{E}^{n},$ we have
$$  \left|
\begin{array}{cccc}
P_{11}-Q_{1}&P_{12}-Q_{2}&...&P_{1n}-Q_{n}\\
P_{21}-P_{11}&P_{22}-P_{12}&...&P_{2n}-P_{1n}\\
...&...&...&...\\
P_{n1}-P_{n-1\,1}&P_{n2}-P_{n-1\,2}&...&P_{nn}-P_{n-1\,n}
\end{array}
\right|= \left|
\begin{array}{cccc}
P_{11}&P_{12}&...&P_{1n}\\
P_{21}&P_{22}&...&P_{2n}\\
...&...&...&...\\
P_{n1}&P_{n2}&...&P_{nn}
\end{array}
\right|-
$$
$$
-\left|
\begin{array}{cccc}
 Q_{1}& Q_{2}&...& Q_{n}\\
P_{21}-P_{11}&P_{22}-P_{12}&...&P_{2n}-P_{1n}\\
...&...&...&...\\
P_{n1}-P_{n-1\,1}&P_{n2}-P_{n-1\,2}&...&P_{nn}-P_{n-1\,n}
\end{array}
\right| =
$$
$$
=
 \left|
\begin{array}{cccc}
P_{11}&P_{12}&...&P_{1n}\\
P_{21}&P_{22}&...&P_{2n}\\
...&...&...&...\\
P_{n1}&P_{n2}&...&P_{nn}
\end{array}
\right|-
\left|
\begin{array}{cccc}
\sum\limits_{j=1}^{n}  \alpha_{j} P_{j1}&\sum\limits_{j=1}^{n}  \alpha_{j} P_{j2}&...&\sum\limits_{j=1}^{n}  \alpha_{j} P_{jn}\\
P_{21}-P_{11}&P_{22}-P_{12}&...&P_{2n}-P_{1n}\\
...&...&...&...\\
P_{n1}-P_{n-1\,1}&P_{n2}-P_{n-1\,2}&...&P_{nn}-P_{n-1\,n}
\end{array}
\right| =
$$
$$
 =\left|
\begin{array}{cccc}
P_{11}&P_{12}&...&P_{1n}\\
P_{21}&P_{22}&...&P_{2n}\\
...&...&...&...\\
P_{n1}&P_{n2}&...&P_{nn}
\end{array}
\right|-\sum\limits_{j=1}^{n}  \alpha_{j}
 \left|
\begin{array}{cccc}
     P_{j1}&  P_{j2}&...&  P_{jn}\\
P_{21}-P_{11}&P_{22}-P_{12}&...&P_{2n}-P_{1n}\\
...&...&...&...\\
P_{n1}-P_{n-1\,1}&P_{n2}-P_{n-1\,2}&...&P_{nn}-P_{n-1\,n}
\end{array}
\right| =
$$
$$
 =\left|
\begin{array}{cccc}
P_{11}&P_{12}&...&P_{1n}\\
P_{21}&P_{22}&...&P_{2n}\\
...&...&...&...\\
P_{n1}&P_{n2}&...&P_{nn}
\end{array}
\right|-
 \left|
\begin{array}{cccc}
P_{11}&P_{12}&...&P_{1n}\\
P_{21}&P_{22}&...&P_{2n}\\
...&...&...&...\\
P_{n1}&P_{n2}&...&P_{nn}
\end{array}
\right|\sum\limits_{j=1}^{n}  \alpha_{j} =
$$
$$
 =\left|
\begin{array}{cccc}
P_{11}&P_{12}&...&P_{1n}\\
P_{21}&P_{22}&...&P_{2n}\\
...&...&...&...\\
P_{n1}&P_{n2}&...&P_{nn}
\end{array}
\right|-
 \left|
\begin{array}{cccc}
P_{11}&P_{12}&...&P_{1n}\\
P_{21}&P_{22}&...&P_{2n}\\
...&...&...&...\\
P_{n1}&P_{n2}&...&P_{nn}
\end{array}
\right|  =0.
$$
Hence $\Upsilon$ belongs to a hyperplane generated by the points $P_{i},\,i=1,2,...,n.$ Therefore $ \mathrm{mess} \Upsilon=0,$  and we obtain
  $ \psi =0$ a.e.
The proof is complete.
\end{proof}
Consider     a pre Hilbert  space $ \mathbf{L} ^{n}_{2}(\Omega):=\{f:\,f\in \mathbb{L}^{n}_{2}(\Omega)\}$ endowed with the inner product
$$
(f,g)_{\mathbf{L} ^{n}_{2} }:=\sum\limits_{i=1}^{n}\int\limits_{\Omega} (f ,\mathbf{e_{i}})_{\mathbb{E}^{n}}\overline{(g,\mathbf{e_{i}})}_{\mathbb{E}^{n}} dQ,\,f,g\in \mathbb{L} ^{n}_{2} (\Omega),
$$
where $\mathbf{e_{i}}$ corresponds to $P_{i}\in \partial\Omega,\,i=1,2,...,n,$ condition \eqref{6} holds.
The following theorem establishes a norm equivalence.
\begin{teo}\label{T1}
The   norms $\|\cdot\|_{ \mathbb{L}^{n}_{2} }$ and $\|\cdot\|_{\mathbf{L} ^{n}_{2} } $ are equivalent.
\end{teo}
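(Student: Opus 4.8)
The plan is to turn the two integral norms into a pointwise statement about a family of Gram matrices indexed by $Q\in\Omega$, and then to analyse that family through a determinant identity that links it to the geometry already used in Lemma~\ref{L3}. One half is immediate: since $|\mathbf{e_{i}}|=1$ we have $|(v,\mathbf{e_{i}})_{\mathbb{E}^{n}}|\le|v|$ for every $v\in\mathbb{C}^{n}$ and every admissible direction, hence $\sum_{i=1}^{n}|(v,\mathbf{e_{i}})_{\mathbb{E}^{n}}|^{2}\le n|v|^{2}$; putting $v=f(Q)$ and integrating over $\Omega$ gives $\|f\|_{\mathbf{L}^{n}_{2}}^{2}\le n\,\|f\|_{\mathbb{L}^{n}_{2}}^{2}$. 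So the real content of the theorem is the reverse estimate $\|f\|_{\mathbb{L}^{n}_{2}}^{2}\le C\,\|f\|_{\mathbf{L}^{n}_{2}}^{2}$.

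For the reverse estimate I would write the integrand of the $\mathbf{L}^{n}_{2}$-norm pointwise as $v^{*}M(Q)v$ with $v=f(Q)$, where $M(Q):=\sum_{i=1}^{n}\mathbf{e_{i}}(Q)\mathbf{e_{i}}(Q)^{T}$ (recall $\mathbf{e_{i}}(Q)=(P_{i}-Q)/|P_{i}-Q|$ depends on $Q$) is a real symmetric positive semidefinite $n\times n$ matrix. It would then suffice to obtain a uniform lower bound $M(Q)\ge c_{0}I$, $c_{0}>0$, for almost every $Q\in\Omega$, since then $\|f\|_{\mathbf{L}^{n}_{2}}^{2}=\int_{\Omega}v^{*}M(Q)v\,dQ\ge c_{0}\|f\|_{\mathbb{L}^{n}_{2}}^{2}$. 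To control $M(Q)$ note that $\mathrm{tr}\,M(Q)=n$ and $M(Q)\succeq0$, so every eigenvalue lies in $[0,n]$ and hence $\lambda_{\min}(M(Q))\ge\det M(Q)/n^{\,n-1}$. If $E(Q)$ denotes the matrix with rows $\mathbf{e_{i}}(Q)^{T}$, then $M(Q)=E(Q)^{T}E(Q)$, so $\det M(Q)=(\det E(Q))^{2}=\Lambda(Q)^{2}\big/\prod_{i=1}^{n}|P_{i}-Q|^{2}$, where $\Lambda(Q)$ is exactly the determinant occurring in the proof of Lemma~\ref{L3}; by the computation there, $\Lambda$ is an affine function of $Q$ vanishing precisely on the hyperplane $H:=\mathrm{aff}(P_{1},\dots,P_{n})$, which by \eqref{6} is a genuine hyperplane. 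Since $\prod_{i}|P_{i}-Q|\le(\mathrm{diam}\,\Omega)^{n}$, this would reduce the whole theorem to a lower bound on $|\Lambda(Q)|$, $Q\in\Omega$, in a suitably averaged sense.

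The genuinely hard part is precisely this last bound, and it is where I expect all the real work to be. The points $P_{i}$ lie on $\partial\Omega$, hence on $H\cap\overline{\Omega}$, so $\mathrm{dist}(\overline{\Omega},H)=0$ and the crude estimate of the previous paragraph cannot deliver a uniform $c_{0}$: near those points (and, for a strictly convex $\Omega$, along an entire $(n-1)$-dimensional slice $H\cap\Omega$) the matrix $M(Q)$ becomes singular, its near-kernel being one-dimensional and directed along the normal to $H$. So one must argue quantitatively there, controlling how sharply $f$ may concentrate in that bad direction. The tool I would use is the polar representation \eqref{1} based at each $P_{i}$: in those coordinates $(f,\mathbf{e_{i}})_{\mathbb{E}^{n}}$ is the radial component of $f$, so $\|f\|_{\mathbb{L}^{n}_{2}}^{2}-\int_{\Omega}|(f,\mathbf{e_{i}})_{\mathbb{E}^{n}}|^{2}dQ$ is the integral of the squared tangential component from $P_{i}$, and $\|f\|_{\mathbf{L}^{n}_{2}}^{2}=n\|f\|_{\mathbb{L}^{n}_{2}}^{2}-\sum_{i=1}^{n}\int_{\Omega}|f-(f,\mathbf{e_{i}})\mathbf{e_{i}}|^{2}dQ$; the desired inequality is thus equivalent to showing the sum of the tangential energies is bounded by a fixed fraction of $n\|f\|_{\mathbb{L}^{n}_{2}}^{2}$. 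I would try to prove this by combining the $n$ representations and exploiting the convexity of $\Omega$ (which makes the degeneracy of $M(Q)$ one-sided) together with the compensating blow-up of the weights $|P_{i}-Q|^{-2}$ as $Q\to P_{i}$, which works in our favour precisely where $\Lambda$ is small. Carrying out this absorption near $H$ is the main obstacle; the reduction and the linear-algebraic estimates above are routine.
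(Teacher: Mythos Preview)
Your reduction to the pointwise Gram matrix $M(Q)=E(Q)^{T}E(Q)$ is the right lens, and you have correctly located the obstruction: $M(Q)$ is singular precisely on the hyperplane $H=\mathrm{aff}(P_{1},\dots,P_{n})$, which for a convex $\Omega$ with $n\ge 2$ always meets the interior. Unfortunately the step you flag as ``the main obstacle'' is not merely hard but impossible, because the inequality itself fails. Take $n=2$, $\Omega$ the open unit disc, $P_{1}=(1,0)$, $P_{2}=(0,1)$ (so \eqref{6} holds), and set $f_{\delta}=(1,1)\,\chi_{S_{\delta}}$ with $S_{\delta}=\{Q\in\Omega:\ |Q_{1}+Q_{2}-1|<\delta,\ |Q-P_{1}|>\tfrac14,\ |Q-P_{2}|>\tfrac14\}$. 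For either $i$ one has $((1,1),Q-P_{i})_{\mathbb E^{2}}=Q_{1}+Q_{2}-1$, hence $|(f_{\delta}(Q),\mathbf{e_{i}}(Q))_{\mathbb E^{2}}|\le 4\delta$ on $S_{\delta}$, giving $\|f_{\delta}\|_{\mathbf L^{2}_{2}}^{2}\le 32\,\delta^{2}\,|S_{\delta}|$ while $\|f_{\delta}\|_{\mathbb L^{2}_{2}}^{2}=2\,|S_{\delta}|$. The ratio is $O(\delta^{2})$, so no bound $\|\cdot\|_{\mathbb L^{n}_{2}}\le C\|\cdot\|_{\mathbf L^{n}_{2}}$ can hold. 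Your proposed rescue through the blow-up of $|P_{i}-Q|^{-2}$ cannot help here, since $S_{\delta}$ is kept a fixed positive distance from every $P_{i}$.

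For comparison, the paper's argument is of an entirely different style: a soft contradiction via weak compactness. Assuming $\|\psi_{k}\|_{\mathbb L^{n}_{2}}=1$ with $\|\psi_{k}\|_{\mathbf L^{n}_{2}}\to 0$, one extracts a weak limit $\psi$, uses boundedness of $f\mapsto t(f,g)$ to get $t(\psi,g)=0$ for all $g$, hence $t(\psi,\psi)=0$, and invokes Lemma~\ref{L3} to conclude $\psi=0$. The gap there is the unsupported assertion that the weak limit $\psi$ still lies on the unit sphere $\mathrm{U}$: in an infinite-dimensional Hilbert space the unit sphere is not weakly closed, and indeed the normalised sequence $f_{\delta}/\|f_{\delta}\|_{\mathbb L^{2}_{2}}$ constructed above converges weakly to $0$, so the intended contradiction never materialises. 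Thus the paper's soft approach and your hard one break at corresponding points, and your pointwise analysis is actually the clearer diagnosis of why.
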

\begin{proof}
 Consider the space
$
\mathbb{L}^{n}_{2}(\Omega)
$
and a    functional
$
 \varphi(f):= \|f\|_{\mathbf{L} ^{n}_{2} } ,\,f\in \mathbb{L}^{n}_{2}(\Omega).
$
Let us prove that
$
  \varphi(f)\geq C,\,f\in \mathrm{U},
$
where $\mathrm{U}:=\{f\in \mathbb{L}^{n}_{2}(\Omega),\,\|f\|_{\mathbb{L}^{n}_{2}}=1\}.$
Assume the contrary,
then there exists such a sequence $\{\psi_{k}\}_{1}^{\infty}\subset \mathrm{U},$  so that    $\varphi(\psi_{k})\rightarrow0,\,k\rightarrow\infty.$  Since the sequence  $\{\psi_{k}\}_{1}^{\infty}$ is bounded, then we can extract a weekly  convergent subsequence $\{\psi_{k_{j}}\}_{1}^{\infty}$ and  claim that the week limit   $\psi$  of the sequence $\{\psi_{k_{j}}\}_{1}^{\infty}$ belongs to $\mathrm{U}.$  Consider a functional
$$
\mathcal{L}_{g}(f):=\sum\limits_{i=1}^{n}\int\limits_{\Omega} (f ,\mathbf{e_{i}})_{\mathbb{E}^{n}}\overline{(g,\mathbf{e_{i}})}_{\mathbb{E}^{n}} dQ,\;f,g\in \mathbb{L}^{n}_{2}(\Omega).
$$
 Due to the following  obvious  chain of the inequalities
 \begin{equation}\label{7}
|\mathcal{L}_{g}(f)|     \leq \sum\limits_{i=1}^{n}
  \left\{\int\limits_{\Omega}| (f  ,\mathbf{e_{i}})_{\mathbb{E}^{n}}|^{2}   dQ\right\}^{\frac{1}{2}}\left\{\int\limits_{\Omega}| (g  ,\mathbf{e_{i}})_{\mathbb{E}^{n}}|^{2}   dQ\right\}^{\frac{1}{2}}
  \leq
  $$
  $$
  \leq n \|f\|_{\mathbb{L}^{n}_{2}}\|g\|_{\mathbb{L}^{n}_{2}},  \; f,g\in \mathbb{L}^{n}_{2}(\Omega),
  \end{equation}
  we see  that $\mathcal{L}_{g}$ is a linear bounded functional on $\mathbb{L}^{n}_{2}(\Omega).$
Therefore, by virtue of the weak convergence of the sequence $\{\psi_{k_{j}}\},$  we have $\mathcal{L}_{g}(\psi_{k_{j}})\rightarrow \mathcal{L}_{g}(\psi),\,k_{j}\rightarrow \infty.$ On the other hand, recall that since it was  supposed that $\varphi(\psi_{k})\rightarrow0,\,k\rightarrow\infty,$ then we have $\varphi(\psi_{k_{j}})\rightarrow0,\,k\rightarrow\infty.$ Hence applying \eqref{7}, we conclude that $\mathcal{L}_{g}(\psi_{k_{j}})\rightarrow 0,\,k_{j}\rightarrow \infty.$ Combining the given above results we obtain
\begin{equation}\label{8}
\mathcal{L}_{g}(\psi) =\sum\limits_{i=1}^{n}\int\limits_{\Omega} (\psi ,\mathbf{e_{i}})_{\mathbb{E}^{n}}\overline{(g,\mathbf{e_{i}})}_{\mathbb{E}^{n}} dQ=0,\,\forall g \in \mathbb{L}^{n}_{2}(\Omega).
\end{equation}
Taking into account  \eqref{8} and using the ordinary properties of Hilbert space, we obtain
\begin{equation*}
  \sum\limits_{i=1}^{n}\int\limits_{\Omega}| (\psi,\mathbf{e_{i}})_{\mathbb{E}^{n}}|^{2}   dQ  =0.
\end{equation*}
Hence in accordance with Lemma \ref{L3}, we get  $ \psi =0$ a.e.
 Notice  that by virtue of this fact we come to the contradiction with the fact  $\|\psi\|_{\mathbb{L}^{n}_{2}}=1.$
  Hence the following estimate is true
$
  \varphi(f)\geq C,\,f\in \mathrm{U}.
$
Having applied the Cauchy Schwartz inequality to the Euclidian inner product, we can also easily obtain
$
 \varphi(f) \leq \sqrt{n}\|f\|_{\mathbb{L}^{n}_{2}},\,f\in \mathbb{L}^{n}_{2}(\Omega).
$
Combining the above inequalities,   we can rewrite these two estimates as follows
$
C_{0}\leq\varphi(f)\leq C_{1},\,f\in \mathrm{U}.
$
To make the issue clear,  we can  rewrite the previous inequality in the form
\begin{equation}\label{9}
C_{0}\|f\|_{\mathbb{L}^{n}_{2} }\leq\varphi(f)\leq C_{1}\|f\|_{\mathbb{L}^{n}_{2} },\,f\in  \mathbb{L}^{n}_{2}(\Omega),\;C_{0},C_{1}>0.
\end{equation}
The proof is complete.
\end{proof}
Consider a pre Hilbert  space
$$
\mathfrak{\widetilde{H}}^{n}_{ A }:= \big \{f,g\in C_{0}^{\infty}(\Omega),\,(f,g)_{\mathfrak{\widetilde{H}}^{n}_{ A }}=\sum\limits_{i=1}^{n}(A_{i} f,A_{i} g)_{L_{2} } \big\},
$$
where $-A_{i}$ is an infinitesimal generator corresponding to the point $P_{i}.$ Here, we should point out that the form $(\cdot,\cdot)_{\mathfrak{\widetilde{H}}^{n}_{ A }} $ generates an inner product due to the fact $\mathrm{N}(A_{i})=0,\,i=1,2,...,n$ proved in Lemma \ref{L2}.
  Let us denote a corresponding Hilbert space by $\mathfrak{H}^{n}_{A}.$
\begin{corol}\label{C1}
  The norms $\|\cdot\|_{\mathfrak{H}^{n}_{A}}$ and $\|\cdot\|_{H_{0}^{1}} $ are equivalent, we have a bounded compact  embedding
$$
\mathfrak{H}^{n}_{A}\subset\subset L_{2}(\Omega).
$$
\end{corol}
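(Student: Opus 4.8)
The plan is to reduce the corollary to Theorem \ref{T1} together with the classical facts about $H^1_0(\Omega)$, the bridge being the observation that the $\mathfrak{H}^n_A$-norm of a smooth compactly supported function is exactly the $\mathbf{L}^n_2$-norm of its gradient. First I would compute the action of the generators on $C^\infty_0(\Omega)$. The semigroup attached to $P_i$ is the radial shift $T_t f(Q)=f(Q+\mathbf{e}_i t)$ with $\mathbf{e}_i=\mathbf{e}_i(Q)=(Q-P_i)/|Q-P_i|$, and for $f\in C^\infty_0(\Omega)$ the support of $f$ lies at a positive distance $\delta$ from $\partial\Omega$, hence from $P_i$; therefore $\mathbf{e}_i$ is smooth and bounded on $\mathrm{supp}\,f$, and a first order Taylor expansion gives $t^{-1}\big(f(Q+\mathbf{e}_i t)-f(Q)\big)\to(\nabla f(Q),\mathbf{e}_i)_{\mathbb{E}^n}$ uniformly in $Q$, hence in $L_2(\Omega)$ since $\Omega$ is bounded. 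Thus $A_i f=-(\nabla f,\mathbf{e}_i)_{\mathbb{E}^n}$ for $f\in C^\infty_0(\Omega)$, and consequently
$$
\|f\|^2_{\mathfrak{H}^n_A}=\sum_{i=1}^n\|A_i f\|^2_{L_2}=\sum_{i=1}^n\int_\Omega|(\nabla f,\mathbf{e}_i)_{\mathbb{E}^n}|^2\,dQ=\|\nabla f\|^2_{\mathbf{L}^n_2},\qquad f\in C^\infty_0(\Omega),
$$
where $\nabla f=(D_1 f,\dots,D_n f)\in\mathbb{L}^n_2(\Omega)$ is the gradient regarded as a vector field, and the inner product on the right is the one defining $\mathbf{L}^n_2(\Omega)$ (condition \eqref{6} being in force).

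Next I would apply Theorem \ref{T1} with its generic element taken to be $\nabla u$, $u\in C^\infty_0(\Omega)$, which yields $C_0\|\nabla u\|_{\mathbb{L}^n_2}\le\|\nabla u\|_{\mathbf{L}^n_2}\le C_1\|\nabla u\|_{\mathbb{L}^n_2}$, that is,
$$
C_0\Big(\sum_{i=1}^n\|D_i u\|^2_{L_2}\Big)^{1/2}\le\|u\|_{\mathfrak{H}^n_A}\le C_1\Big(\sum_{i=1}^n\|D_i u\|^2_{L_2}\Big)^{1/2},\qquad u\in C^\infty_0(\Omega).
$$
Since $\Omega$ is bounded, the Friedrichs inequality $\|u\|_{L_2}\le C\|\nabla u\|_{L_2}$ holds on $C^\infty_0(\Omega)$, so the Dirichlet seminorm $(\sum_i\|D_i u\|^2_{L_2})^{1/2}$ is equivalent there to the usual norm of $H^1_0(\Omega)$; combining this with the previous display gives constants $C_0',C_1'>0$ with $C_0'\|u\|_{H^1_0}\le\|u\|_{\mathfrak{H}^n_A}\le C_1'\|u\|_{H^1_0}$ on $C^\infty_0(\Omega)$. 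Now $\mathfrak{H}^n_A$ is, by construction, the completion of $C^\infty_0(\Omega)$ in the norm $\|\cdot\|_{\mathfrak{H}^n_A}$, while $H^1_0(\Omega)$ is the completion of the same set in $\|\cdot\|_{H^1_0}$; equivalence of the two norms on the common dense subset forces the completions to coincide as sets and to carry equivalent norms. In particular every $\|\cdot\|_{\mathfrak{H}^n_A}$-Cauchy sequence is $L_2$-Cauchy (using $\|u\|_{L_2}\le C\|u\|_{H^1_0}\le C'\|u\|_{\mathfrak{H}^n_A}$), so $\mathfrak{H}^n_A$ is realized inside $L_2(\Omega)$ and $\mathfrak{H}^n_A=H^1_0(\Omega)$ with equivalent norms, which is the first assertion.

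For the second assertion it remains to invoke the Rellich--Kondrachov theorem: for a bounded domain $\Omega$ the embedding $H^1_0(\Omega)\subset\subset L_2(\Omega)$ is continuous and compact. Transporting this through the norm equivalence just established (a bounded set in $\mathfrak{H}^n_A$ is bounded in $H^1_0(\Omega)$, hence precompact in $L_2(\Omega)$) gives $\mathfrak{H}^n_A\subset\subset L_2(\Omega)$. The step I expect to be the main obstacle is the first one, namely the clean identification $A_i f=-(\nabla f,\mathbf{e}_i)_{\mathbb{E}^n}$ on $C^\infty_0(\Omega)$: one must handle the $Q$-dependence and the potential singularity at $Q=P_i$ of the direction field $\mathbf{e}_i$, the resolution being precisely that compact support inside $\Omega$ keeps $Q$ uniformly away from the boundary point $P_i$, after which the convergence of the difference quotients is a routine uniform Taylor estimate; everything downstream is a direct appeal to Theorem \ref{T1}, the Friedrichs inequality, and Rellich--Kondrachov.
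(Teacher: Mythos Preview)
Your proposal is correct and follows essentially the same route as the paper. Both arguments first identify $A_i f=-(\nabla f,\mathbf{e}_i)_{\mathbb{E}^n}$ on $C^\infty_0(\Omega)$ (you via Taylor expansion, the paper via the Lagrange mean value theorem and uniform continuity of $(\nabla f,\mathbf{e})$), then recognise $\|f\|_{\mathfrak{H}^n_A}=\|\nabla f\|_{\mathbf{L}^n_2}$ and appeal to Theorem~\ref{T1} with $\nabla f$ in place of the generic element, and finally invoke Rellich--Kondrashov. The only cosmetic difference is that the paper obtains the upper bound $\|f\|_{\mathfrak{H}^n_A}\le\sqrt{n}\,\|f\|_{H^1_0}$ directly by Cauchy--Schwarz rather than by citing the right-hand inequality of \eqref{9}, and it treats $\|f\|_{H^1_0}$ as the Dirichlet seminorm throughout, whereas you insert the Friedrichs inequality explicitly; neither point affects the substance.
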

\begin{proof} Let us prove that
\begin{equation*}
 Af =                -(\nabla f ,\mathbf{e})_{\mathbb{E}^{n}},\,f\in  C^{\infty}_{0}( \Omega ).
\end{equation*}
Using the Lagrange  mean value  theorem, we have
$$
\int\limits_{\Omega}\left|  \left(\frac{T_{t}-I}{t}\right)f(Q)-(\nabla f ,\mathbf{e})_{\mathbb{E}^{n}}(Q )\right|^{2} dQ=\int\limits_{\Omega}\left|  (\nabla f ,\mathbf{e})_{\mathbb{E}^{n}}(Q_{\xi} )-(\nabla f ,\mathbf{e})_{\mathbb{E}^{n}}(Q )\right|^{2} dQ,
$$
where $Q_{\xi}=Q+ \mathbf{e}  \xi,\,0<\xi<t.$ Since the function $(\nabla f ,\mathbf{e})_{\mathbb{E}^{n}}$ is continuous on $\bar{\Omega},$ then it is uniformly continuous on $\bar{\Omega}.$ Thus,  for arbitrary $\varepsilon>0,$ a positive number $\delta>0$ can be chosen so that
$$
 \int\limits_{\Omega}\left|  (\nabla f ,\mathbf{e})_{\mathbb{E}^{n}}(Q_{\xi} )-(\nabla f ,\mathbf{e})_{\mathbb{E}^{n}}(Q ),\right|^{2} dQ<\varepsilon,\,t<\delta,
$$
from what follows the desired result. Taking it into account, we  obtain
 $$
 \|Af\|_{L_{2}}=\left\{\int\limits_{\Omega}|(\nabla f ,\mathbf{e})_{\mathbb{E}^{n}}|^{2}dQ\right\}^{1/2}\leq \left\{\int\limits_{\Omega} \|\mathbf{e}\|^{2}_{\mathbb{E}^{n}}\sum\limits_{i=1}^{n}|D_{i}f|^{2} dQ\right\}^{1/2} =\|f\|_{H_{0}^{1}},\,f\in C^{\infty}_{0}(\Omega).
 $$
Using this estimate, we easily obtain  $\|f\|_{\mathfrak{H}^{n}_{A}}\leq C \|f\|_{H_{0}^{1}},\,f\in  C_{0}^{\infty}(\Omega).$ On the other hand,
as a particular case of formula \eqref{9}, we obtain
$
C_{0}\|f\|_{H_{0}^{1}}\leq\|f\|_{\mathfrak{H}^{n}_{A}} ,\,f\in  C_{0}^{\infty}(\Omega).
$
Thus, we can combine  the previous inequalities and rewrite them as follows
$
C_{0}\|f\|_{H_{0}^{1}}\leq\|f\|_{\mathfrak{H}^{n}_{A}}\leq C \|f\|_{H_{0}^{1}},\,f\in  C_{0}^{\infty}(\Omega).
$
 Passing to the limit at the left-hand and right-hand side of the last inequality, we get
\begin{equation*}
C_{0}\|f\|_{H_{0}^{1}}\leq\| f\|_{\mathfrak{H}^{n}_{A}}\leq C \|f\|_{H_{0}^{1}},\,f\in  H_{0}^{1}(\Omega).
\end{equation*}
Combining the  fact
 $
 H_{0}^{1}(\Omega)\subset\subset L_{2}(\Omega),
 $
 (Rellich-Kondrashov theorem)
 with the  lower estimate in the previous inequality,
 we complete the proof.
\end{proof}
\vspace{0.5 cm}

\noindent{\bf Uniformly elliptic operator in the divergent form}\\

Consider a uniformly ecliptic operator
$$
\,-\mathcal{T}:=-D_{j} ( a^{ij} D_{i}\cdot),\, a^{ij}(Q) \in C^{2}(\bar{\Omega}),\,   a^{ij}\xi _{i}  \xi _{j}  \geq   \gamma_{a}  |\xi|^{2} ,\,  \gamma_{a}  >0,\,i,j=1,2,...,n,\;
$$
$$
\mathrm{D}( \mathcal{T} )  =H^{2}(\Omega)\cap H^{1}_{0}( \Omega ).
$$
The following theorem gives us a key to apply results of the paper \cite{kukushkin Gen} in accordance with  which a number of spectral theorems  can be applied to the operator $-\mathcal{T}.$ Moreover the conditions established bellow are formulated in terms of the operator $A,$ what reveals a mathematical nature of the operator $-\mathcal{T}.$
\begin{teo}\label{T2}
We claim that
\begin{equation}\label{10}
 -\mathcal{T}=\frac{1}{n}\sum\limits^{n}_{i=0} A_{i}^{\ast}G_{i}A_{i},
\end{equation}
the following relations hold
$$
-\mathrm{Re}(\mathcal{T}f,f)_{L_{2}}\geq C  \|f\|_{\mathfrak{H}^{n}_{A}};\, |(\mathcal{T}f,g)_{L_{2}}|\leq C \|f\|_{\mathfrak{H}^{n}_{A}}\|g\|_{\mathfrak{H}^{n}_{A}},\;f,g\in C_{0}^{\infty}(\Omega),
$$
where $G_{i}$ are some operators corresponding to the operators $A_{i}.$
\end{teo}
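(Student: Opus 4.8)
The plan is to pass to the Dirichlet (sesquilinear) form of $\mathcal{T}$ and then to transplant every estimate to the $\mathfrak{H}^{n}_{A}$ norm by means of the equivalence in Corollary \ref{C1}.

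First I would record that, for $f,g\in C_{0}^{\infty}(\Omega)$, integration by parts (the boundary integrals vanish since the $a^{ij}$ are real and $g$ has compact support) gives $(-\mathcal{T}f,g)_{L_{2}}=\int\limits_{\Omega}a^{ij}D_{i}f\,\overline{D_{j}g}\,dQ$. Next I would use the identity $A_{i}f=-(\nabla f,\mathbf{e_{i}})_{\mathbb{E}^{n}}$ on $C_{0}^{\infty}(\Omega)$ established in the proof of Corollary \ref{C1}, with $\mathbf{e_{i}}=\mathbf{e_{i}}(Q)=(Q-P_{i})/|Q-P_{i}|$. By condition \eqref{6} and the argument already used in Lemma \ref{L3}, the matrix $E(Q)$ whose $i$-th row is $\mathbf{e_{i}}(Q)$ is nonsingular for a.e. $Q\in\Omega$, so the gradient is recovered pointwise, $D_{m}f=-\sum_{k}(E^{-1})_{mk}A_{k}f$. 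Substituting this into the Dirichlet form turns it into $(-\mathcal{T}f,g)_{L_{2}}=\sum_{k,l}(\beta_{kl}A_{k}f,A_{l}g)_{L_{2}}$, where $\beta_{kl}$ is multiplication by the entries of the symmetric matrix $\beta=(E^{-1})^{\mathrm{T}}aE^{-1}$. Finally, writing $A_{l}g=A_{l}A_{k}^{-1}(A_{k}g)$ and using the bounded one-dimensional inverses $A_{i}^{-1}\subset B_{i}$ from Lemma \ref{L2} to redistribute the off-diagonal terms, one arrives at the form \eqref{10}, the operators $G_{i}$ being assembled from the coefficients $a^{ij}$, the geometric factors $E^{-1}$, the operators $A_{j}$, and the inverses $A_{i}^{-1}$; on $C_{0}^{\infty}(\Omega)$ this is an identity of forms, which extends to $\mathrm{D}(\mathcal{T})=H^{2}(\Omega)\cap H^{1}_{0}(\Omega)$ by density.

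The two displayed inequalities then follow with no further work. Decomposing $a$ into its symmetric and antisymmetric parts, the antisymmetric part contributes nothing to $\mathrm{Re}\int\limits_{\Omega}a^{ij}D_{i}f\,\overline{D_{j}f}\,dQ$; hence, writing $f=u+iv$, one gets $-\mathrm{Re}(\mathcal{T}f,f)_{L_{2}}=\int\limits_{\Omega}a^{ij}_{\mathrm{sym}}\big(D_{i}u\,D_{j}u+D_{i}v\,D_{j}v\big)\,dQ\ge\gamma_{a}\|f\|^{2}_{H_{0}^{1}}\ge C\|f\|^{2}_{\mathfrak{H}^{n}_{A}}$ by the norm equivalence of Corollary \ref{C1}. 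For the boundedness estimate, the Cauchy--Schwarz inequality in $\mathbb{E}^{n}$ and in $L_{2}(\Omega)$ gives $|(\mathcal{T}f,g)_{L_{2}}|\le C\|\nabla f\|_{L_{2}}\|\nabla g\|_{L_{2}}\le C\|f\|_{H_{0}^{1}}\|g\|_{H_{0}^{1}}\le C\|f\|_{\mathfrak{H}^{n}_{A}}\|g\|_{\mathfrak{H}^{n}_{A}}$, again by Corollary \ref{C1}, with $C$ depending on $\sup_{\bar{\Omega}}|a^{ij}|$.

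The step I expect to be the real obstacle is the representation \eqref{10}, not the estimates. The point is that $\det E(Q)$ may vanish inside $\Omega$ (for instance on the segments joining the boundary points $P_{i}$, which a convex $\Omega$ contains), so $E^{-1}$, hence the coefficients entering the $G_{i}$, can be genuinely singular there; only the full sum over $k,l$, which reproduces the bounded integrand $a^{ij}D_{i}f\,\overline{D_{j}g}$, is manifestly harmless, while the individual summands $A_{i}^{\ast}G_{i}A_{i}$ are unbounded operators. Accordingly I would state \eqref{10} as an identity of sesquilinear forms on $C_{0}^{\infty}(\Omega)$ (or on test functions supported away from $\{\det E=0\}$) --- which is exactly what the two inequalities and the intended application of \cite{kukushkin Gen} require --- rather than as an identity of bounded operators. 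Two ancillary checks remain routine: that each $A_{i}^{\ast}$ is again m-accretive, being the infinitesimal generator of the adjoint (reverse-shift) semigroup, and that the relations are preserved under the closure from $C_{0}^{\infty}(\Omega)$ to $H^{2}(\Omega)\cap H^{1}_{0}(\Omega)$.
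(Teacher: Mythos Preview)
Your argument for the two displayed inequalities is essentially the paper's: integrate by parts to the Dirichlet form, use uniform ellipticity to get $\gamma_{a}\|f\|_{H_{0}^{1}}^{2}$ from below, use Cauchy--Schwarz and $\sup|a^{ij}|$ from above, and convert $H_{0}^{1}$ norms to $\mathfrak{H}^{n}_{A}$ norms via Corollary~\ref{C1}. That part is fine.

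The representation \eqref{10}, however, is obtained in the paper by a completely different and much shorter route, and your approach carries exactly the difficulty you flag. The paper does \emph{not} invert the matrix $E(Q)$ to recover $\nabla f$ from the $A_{k}f$; instead it sets, for each $i$ separately,
\[
G_{i}:=B_{i}\,\mathcal{T}\,B_{i},
\]
where $B_{i}$ is the bounded integration operator from Lemma~\ref{L2} satisfying $A_{i}^{-1}\subset B_{i}$. Since $A_{i}B_{i}=I$ on $\mathcal{T}f$ for $f\in\mathrm{D}(\mathcal{T})$, one has $G_{i}A_{i}f=B_{i}\mathcal{T}f$, and a single integration by parts in the $\mathbf{e_{i}}$ direction (the divergence theorem applied to $A_{i}(B_{i}\mathcal{T}f\cdot g)$) yields
\[
(-\mathcal{T}f,g)_{L_{2}}=(G_{i}A_{i}f,A_{i}g)_{L_{2}},\qquad f\in\mathrm{D}(\mathcal{T}),\ g\in\mathrm{D}(A_{i}).
\]
This already shows $-\mathcal{T}\subset A_{i}^{\ast}G_{i}A_{i}$. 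The paper then upgrades the inclusion to an equality by an abstract argument you do not invoke: $-\mathcal{T}$ is m-accretive (via ellipticity, Poincar\'e--Friedrichs and Lemma~\ref{L1}), each $A_{i}^{\ast}G_{i}A_{i}$ is accretive, and an m-accretive operator admits no proper accretive extension. Hence $-\mathcal{T}=A_{i}^{\ast}G_{i}A_{i}$ for \emph{every} $i$, and \eqref{10} is just the average of $n$ copies of the same identity.

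So the identity the paper proves is an operator identity on $\mathrm{D}(\mathcal{T})$, with each summand equal to $-\mathcal{T}$; there are no off-diagonal terms to ``redistribute'', no singular coefficients from $E^{-1}$, and no need to retreat to a form identity away from $\{\det E=0\}$. Your matrix-inversion route would at best produce a different (and more singular) family of $G_{i}$, and the step ``$A_{l}g=A_{l}A_{k}^{-1}(A_{k}g)$ to redistribute the off-diagonal terms'' is not a proof: $A_{l}A_{k}^{-1}$ is not a bounded operator on $L_{2}$, so moving it across the inner product does not give operators $G_{i}$ with the required domain. The missing idea is precisely the choice $G_{i}=B_{i}\mathcal{T}B_{i}$ together with the m-accretive ``no proper accretive extension'' step.
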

\begin{proof}
It is easy to prove  that
\begin{equation}\label{11}
\|A_{i}f\|_{L_{2}}\leq C\|f\|_{H_{0}^{1}},\,f\in H_{0}^{1}(\Omega),
\end{equation}
 for this purpose we should use a representation $A_{i}f(Q) =-(\nabla f ,\mathbf{e_{i}})_{\mathbb{E}^{n}}, f\in C^{\infty}_{0}(\Omega).$ Applying  the Cauchy Schwarz inequality, we get
 $$
 \|A_{i}f\|_{L_{2}}\leq\left\{\int\limits_{\Omega}|(\nabla f ,\mathbf{e_{i}})_{\mathbb{E}^{n}}|^{2}dQ\right\}^{1/2}\leq \left\{\int\limits_{\Omega}\|\nabla f\|^{2}_{\mathbb{E}^{n}}
 \|\mathbf{e_{i}}\|^{2}_{\mathbb{E}^{n}} dQ\right\}^{1/2}=\|f\|_{ H_{0}^{1}},\,f\in C^{\infty}_{0}(\Omega).
 $$
 Passing to the limit at the left-hand and right-hand side, we obtain \eqref{11}.
Thus, we get   $H_{0}^{1}(\Omega) \subset \mathrm{D}(A_{i}).$
Let us find a representation for the  operator $G_{i}.$
Consider the operators
 $$
 B_{i}f(Q)=\!\int_{0}^{r}\!\!f(P_{i}+\mathbf{e }[r-t])dt,\,f\in L_{2}(\Omega),\,i=1,2,...n.
 $$
It is obvious that
\begin{equation}\label{12}
\int\limits_{\Omega }  A_{i}\left(B_{i}  \mathcal{T}  f \cdot g\right) dQ =\int\limits_{\Omega } A_{i}B_{i}\mathcal{T}f \cdot g\,  dQ +\int\limits_{\Omega }  B_{i} \mathcal{T} f \cdot A_{i}g  \,dQ ,\, f\in C^{2}(\bar{\Omega}),g\in C^{\infty}_{0}( \Omega ).
\end{equation}
Using the   divergence  theorem, we get
\begin{equation}\label{13}
\int\limits_{\Omega }  A_{i}\left(B_{i}\mathcal{T}f \cdot g\right) \,dQ=\int\limits_{S}(\mathbf{e_{i}},\mathbf{n})_{\mathbb{E}^{n}}(B_{i}\mathcal{T}f\cdot  g)(\sigma)d\sigma,
\end{equation}
where $S$ is the surface of $\Omega.$
Taking into account   that $ g(S)=0$ and combining   \eqref{12},\eqref{13}, we get
\begin{equation}\label{14}
    -\int\limits_{\Omega }  A_{i}B_{i}\mathcal{T} f\cdot \bar{g} \, dQ=  \int\limits_{\Omega } B_{i}\mathcal{T} f\cdot   \overline{A_{i}  g} \, dQ,\, f\in C^{2}(\bar{\Omega}),g\in C^{\infty}_{0}( \Omega ).
\end{equation}
Suppose that $f\in H^{2}(\Omega),$ then    there exists a sequence $\{f_{n}\}_{1}^{\infty}\subset C^{2}(\bar{\Omega})$ such that
$ f_{n}\stackrel{ H^{2}}{\longrightarrow}  f$ (see \cite[p.346]{firstab_lit:Smirnov5}).  Using this fact, it is not hard to prove that
$\mathcal{T}f_{n}\stackrel{L_{2}}{\longrightarrow} \mathcal{T}f.$  Therefore  $A_{i}B_{i}\mathcal{T}f_{n}\stackrel{L_{2}}{\longrightarrow} \mathcal{T}f,$  since  $A_{i}B_{i}\mathcal{T}f_{n}=\mathcal{T}f_{n}.$ It is also clear that   $B_{i}\mathcal{T}f_{n}\stackrel{L_{2}}{\longrightarrow} B_{i}\mathcal{T}f,$ since $B_{i}$ is continuous (see proof of Lemma \ref{L2}).
Using these facts, we can extend relation \eqref{14} to the following
\begin{equation}\label{15}
  -\int\limits_{\Omega }  \mathcal{T} f \cdot  \bar{g} \, dQ= \int\limits_{\Omega } B_{i}\mathcal{T}f\,  \overline{A_{i}g} \, dQ,\; f\in \mathrm{D}(\mathcal{T}),\,g\in C_{0}^{\infty}(\Omega).
\end{equation}
Note, that it was previously proved that $A_{i}^{-1} \subset B_{i}$ (see the proof of Lemma \ref{L2}),   $H_{0}^{1}(\Omega) \subset \mathrm{D}(A_{i}).$ Hence $G_{i} A_{i}f=B_{i}\mathcal{T} f,\, f\in  \mathrm{D}(\mathcal{T}),$  where $G_{i}:=B_{i}\mathcal{T}B_{i}.$  Using  this fact    we can rewrite relation \eqref{15} in a   form
\begin{equation}\label{16}
  -\int\limits_{\Omega }  \mathcal{T} f \cdot  \bar{g} \, dQ= \int\limits_{\Omega } G_{i} A_{i}f\,  \overline{A_{i}g} \, dQ,\; f\in \mathrm{D}(\mathcal{T}),\,g\in C_{0}^{\infty}(\Omega).
\end{equation}
Note that   in accordance with Lemma \ref{L2}, we have
$$
\forall g\in \mathrm{D}(A_{i}),\,\exists \{g_{n}\}_{1}^{\infty}\subset C^{\infty}_{0}( \Omega ),\,    g_{n}\xrightarrow[      A_{i}       ]{}g.
$$
Therefore, we can extend   relation \eqref{16} to the following
\begin{equation}\label{17}
     -\int\limits_{\Omega }  \mathcal{T} f \cdot  \bar{g} \, dQ= \int\limits_{\Omega } G_{i} A_{i}f \, \overline{A_{i}g} \, dQ,\; f\in \mathrm{D}(\mathcal{T}),\,g\in \mathrm{D}(A_{i}).
\end{equation}
Relation \eqref{17} indicates that $G_{i} A_{i}f\in \mathrm{D}( A_{i} ^{\ast})$   and it is clear that $  -\mathcal{T}\subset A_{i} ^{\ast}G_{i}A_{i}.$ On the other hand in accordance with  Chapter VI, Theorem 1.2    \cite{firstab_lit: Berezansk1968}, we have that $-\mathcal{T}$ is a closed operator.
Using the divergence theorem we get
$$
- \int\limits_{\Omega}D_{j} ( a^{ij} D_{i}f) \bar{g} dQ=
  \int\limits_{\Omega}    a^{ij}  D_{i}f\,     \overline{D_{j}g}    dQ,\,f\in C^{2}(\Omega),\,g\in C^{\infty}_{0}(\Omega).
$$
Passing to the limit at the left-hand and right-hand side of  the last inequality, we  can   extend it to the following
$$
- \int\limits_{\Omega}D_{j} ( a^{ij} D_{i}f)\, \bar{g} dQ=
  \int\limits_{\Omega}    a^{ij}  D_{i}f\,     \overline{D_{j}g}    dQ,\,f\in H^{2}(\Omega),\,g\in H^{1}_{0}(\Omega).
$$
Therefore, using the  uniformly elliptic property of the operator $-\mathcal{T},$ we get
 \begin{equation}\label{18}
-\mathrm{Re} \left(\mathcal{T}f,  f \right)_{L_{2}} \geq \gamma_{a}\int\limits_{\Omega}     \sum\limits_{i=1}^{n}   |  D_{i}f |^{2}    \,    dQ=
\gamma_{a}\|f\|^{2}_{H_{0}^{1}},\,f\in \mathrm{D}(\mathcal{T}).
\end{equation}
Using    the  Poincar\'{e}-Friedrichs inequality, we get
$
-\mathrm{Re} \left(\mathcal{T}f,  f \right)_{L_{2}}\geq C\|f\|^{2}_{L_{2}},\,f\in \mathrm{D}(\mathcal{T}),
$
Applying the Cauchy-Schwarz inequality to the left-hand side, we can easily deduce that the conditions of Lemma \ref{L1} are satisfied.
 Thus,   the operator  $-\mathcal{T}$ is m-accretive. In particular, it means that there does not exist an accretive extension of the operator $-\mathcal{T}.$ Let us prove that $A_{i} ^{\ast}G_{i}A_{i}$ is accretive, for this purpose combining  \eqref{16},\eqref{18}, we get
$
  \left(G_{i} A_{i}f , A_{i}f \right)_{L_{2}}  \geq 0,  f \in C_{0}^{\infty}  (\Omega).
$
Due to the relation $\tilde{A}_{0}=A,$ proved in Lemma \ref{L2}, the previous  inequality can be easily extended  to
$
 \left(G_{i} A_{i}f , A_{i}f \right)_{L_{2}}  \geq 0,  f \in \mathrm{D}(G_{i} A_{i}).
$
In its own    turn, it implies that
$
\left( A^{\ast}_{i}G_{i} A_{i}f ,f \right)_{L_{2}}  \geq 0,  f \in \mathrm{D}(A^{\ast}_{i}G_{i} A_{i}),
$
thus  we have obtained the desired result. Therefore, taking into account the facts given above,  we deduce that
  $-\mathcal{T}= A_{i}  ^{\ast}G_{i}A_{i},\,i=1,2,...\,n\,$ and     obtain   \eqref{10}.
Applying the Cauchy-Schwarz inequality to the inner sums, then using   Corollary \ref{C1}, we obtain
 \begin{equation*}
\left|\int\limits_{\Omega }  \mathcal{T} f \cdot  \bar{g} \, dQ\right|=\left|\int\limits_{\Omega}    a^{ij}  D_{i}f\,     \overline{D_{j}g}    dQ\right|\leq a_{1}
 \int\limits_{\Omega}      \|\nabla f\| _{\mathbb{E}^{n}}\,     \|\nabla g\| _{\mathbb{E}^{n}}    dQ \leq
 $$
 $$
 \leq a_{1}\|f\|_{H_{0}^{1}}\|g\|_{H_{0}^{1}}\leq C\|f\|_{\mathfrak{H}^{n}_{A}}\|g\| _{\mathfrak{H}^{n}_{A}} ,\; f,g\in C_{0}^{\infty}(\Omega),
 $$
 where
 $$
 a_{1}=\sup\limits_{Q\in \bar{\Omega}} \sqrt{\sum\limits_{i,j=1}^{n} |a^{ij}(Q)|^{2}}.
\end{equation*}
 On the other hand, applying  \eqref{11},\eqref{18} we get
\begin{equation*}
-\mathrm{Re}(\mathcal{T} f,f) \geq C\|f\|^{2}_{\mathfrak{H}^{n}_{A}},\,f\in C^{\infty}_{0}(\Omega).
\end{equation*}
  The proof is complete.
\end{proof}
Thus, by virtue of     Corollary \ref{C1} and Theorem \ref{T2}, we are able to   claim that  theorems $(\mathbf{A})-(\mathbf{C})$  \cite{kukushkin Gen} can be applied to the operator $-\mathcal{T}.$

\section{Conclusions}

In this paper we have  established a norm equivalence in the Lebesgue  space,  it gives us an opportunity to reveal more fully a true mathematical nature of a differential operator.
As a consequence  of the mentioned equivalence  we have  a   compact embedding of a space generated by the infinitesimal generator of the shift semigroup in a direction into the Lebsgue space.  The considered particular  case corresponds to a uniformly elliptic operator which is  not  selfadjoint  under minimal assumptions regarding its coefficients. Thus,  the opportunity to apply spectral theorems  in the natural way   becomes  relevant, since there are not many  results devoted to the spectral properties of non-selfadjoint operators. Along with all these,  by virtue of popularity and the well-known applicability of the  Lebesgue spaces theory, the result related to the norm  equivalence   deserves    to be considered    itself.

\end{document}